\documentclass[3p,times]{elsarticle}

\usepackage{amsmath}
\usepackage{amsthm}
\usepackage{amsfonts}

\newtheorem{theorem}{Theorem}
\newtheorem{definition}[theorem]{Definition}

\newtheorem{lemma}[theorem]{Lemma}
\newtheorem{example}[theorem]{Example}
\newtheorem{corollary}[theorem]{Corollary}

\newcommand{\N}{\mathbb N}

\begin{document}

\begin{frontmatter}

\date{}	
\title{On a certain generalization of $W$-spaces}
\author[add1,add2]{Martin Dole\v zal\fnref{thanks1}}
\ead{dolezal@math.cas.cz}
\author[add3]{Warren~B.~Moors}
\ead{w.moors@auckland.ac.nz}
\address[add1]{Institute of Mathematics, University of Warsaw, Banacha 2, 02-097 Warszawa,	Poland}
\address[add2]{Institute of Mathematics of the Czech Academy of Sciences. \v Zitn\'a 25, 115 67 Praha 1, Czech Republic}
\address[add3]{Department of Mathematics, The University of Auckland, Private Bag 92019, Auckland Mail Centre, Auckland 1142, New Zealand}
\fntext[thanks1]{Research of Martin Dole\v zal was supported by the GA\v CR project 17-27844S and by the FP7-PEOPLE-2012-IRSES project AOS(318910), and by RVO: 67985840.}

\begin{abstract}
We present a simple generalization of $W$-spaces introduced by G.~Gruenhage. We show that this generalization leads to a strictly larger class of topological spaces which we call $\widetilde W$-spaces, and we provide several applications. Namely, we use the notion of $\widetilde W$-spaces to provide sufficient conditions for the product of two spaces to be a Baire space, for a semitopological group to be a topological group, or for a separately continuous function to be continuous at the points of a certain large set.
\end{abstract}

\begin{keyword}
Topological games \sep Baire spaces \sep semitopological groups \sep separate continuity

\MSC[2010] 91A05 \sep 54E52 \sep 54C08
\end{keyword}
			
\end{frontmatter}
	
\section{Introduction}\label{chapter:introduction}
All topological spaces considered in this paper are non-empty and Hausdorff.
	
One of the possible generalizations of first countable spaces are $W$-spaces which were introduces by G.~Gruenhage in~\cite{Gruenhage1}, and studied in detail by the same author in \cite{Gruenhage2}. $W$-spaces are defined in terms of the following topological game. Let $x$ be a point of a topological space $(X,\tau)$. Player~I chooses an open set $U_1$ containing $x$, then player~II chooses a point $x_1\in U_1$. Next, player~I chooses an open set $U_2$ containing $x$, and player~II chooses a point $x_2\in U_2$, and so on. Player~I wins if $x$ is an accumulation point of the sequence $(x_n)_{n\in\N}$. Otherwise player~II wins. We denote this topological game by $G(x)$.	We say that a point $x$ of a topological space $(X,\tau)$ is a $W$-point if player~I has a winning strategy in the game $G(x)$. Finally, we say that a topological space $(X,\tau)$ is a $W$-space if every point $x\in X$ is a $W$-point. (Note that the original winning condition for player~I in the game $G(x)$ was different in \cite{Gruenhage1}; it was required that the sequence $(x_n)_{n\in\N}$ converges to $x$. However, it follows from \cite[Theorem 3.9]{Gruenhage2} that the winning condition can be relaxed to the one which we use, without altering the property of being a $W$-point.)
	
The paper \cite{Gruenhage2} mainly investigates the relationship of $W$-spaces to some other classes of spaces. As an example, let us recall that every first-countable space is a $W$-space, and every $W$-space is countably bi-sequential.
	
In this paper, we provide a further generalization of $W$-spaces (and thus of first-countable spaces) which we call $\widetilde W$-spaces. To do this, we introduce a new topological game which is very similar to the game $G(x)$ described above. But we do not require player~I to choose the open sets such that they contain $x$. Instead, player~I may choose arbitrary non-empty open sets. At first sight, this may look a little weird as one would expect that any reasonable generalization of first-countability should deal with `neighborhoods'. However, we provide several applications which suggest that this new notion could be useful. First, we prove Theorem~\ref{th:productOfBaireSpaces} which is an improvement of~\cite[Theorem~4.4]{Lin_Moors}. This theorem is more general than the statement that the product $X\times Y$ of a Baire space $(X,\tau)$ and a hereditarily Baire space $(Y,\tau')$ is Baire provided $(Y,\tau')$ is a $\widetilde W$-space. (The proof of Theorem~\ref{th:productOfBaireSpaces} is almost the same as the proof of~\cite[Theorem~4.4]{Lin_Moors}, we only realized that it was not used in the proof of~\cite[Theorem~4.4]{Lin_Moors} that the moves of player~I in the game $G(x)$ are neighborhoods of $x$.) In Theorem~\ref{QC-result}, we provide a new sufficient condition for a separately continuous function to be quasi-continuous at certain points. This is a generalization of \cite[Lemma 2]{moors} where points with a countable local base are used instead of $\widetilde W$-points. Theorem~\ref{QC-result} can be used to the study of when a semitopological group is a topological group (see Corollary~\ref{Corollary1} which is a generalization of \cite[Corollary~1]{moors}) or when a separately continuous function is continuous at the points of a certain large set (see Corollary~\ref{Cor:sepCont-cont}). To justify our results, we also provide examples of $\widetilde W$-spaces which are very far from being $W$-spaces (see Examples~\ref{ex:betaN} and \ref{ex:sigma_prod}).

The organization of the paper is very simple. In Chapter~\ref{chapter:notation}, we introduce all the notation needed to prove our results. In Chapter~\ref{chapter:results}, we define the key notions of $\widetilde W$-points and $\widetilde W$-spaces, and we prove our results.
	
\section{Notation}\label{chapter:notation}
First, we recall the definition of $W$-points and $W$-spaces. For a point $x$ of a topological space $(X,\tau)$, we denote by by $G(x)$ the topological game introduced by G.~Gruenhage in \cite{Gruenhage1} (this is the game described in the introduction).

\begin{definition}[G.~Gruenhage]
We say that a point $x$ of a topological space $(X,\tau)$ is a \emph{$W$-point} in $X$ if player~I has a winning strategy in the game $G(x)$.
	
We say that a topological space $(X,\tau)$ is a \emph{$W$-space} if every point $x\in X$ is a $W$-point in $X$.
\end{definition}

By a \emph{strategy} for player~I in the game $G(x)$ we mean a rule that specifies each move of player~I in every possible situation. More precisely, a strategy $\sigma$ for player~I in the game $G(x)$ is a mapping $\sigma\colon X^{<\N}\rightarrow\tau$ defined on the set $X^{<\N}$ of all finite (possibly empty) sequences of elements of $X$ whose values are open subsets of $X$, such that $x\in\sigma(x_1,\ldots,x_n)$ for every $(x_1,\ldots,x_n)\in X^{<\N}$. If player~I follows a strategy $\sigma$ then he starts the game by playing $\sigma(\emptyset)$ in his first move. If player~II replies by choosing some $x_1\in\sigma(\emptyset)$ then player~I plays $\sigma(x_1)$ in his second move. If player~II replies by some $x_2\in\sigma(x_1)$ then player~I continues by $\sigma(x_1,x_2)$, and so on.
A strategy $\sigma$ for player~I in the game $G(x)$ is called a \emph{winning strategy} if player~I wins each run of the game $G(x)$ when following the strategy.
A finite (resp. infinite) sequence $(x_j)_j$ of elements of $X$ is called a \emph{$\sigma$-sequence} if player~II can play his finitely many first moves (resp. each of his moves) of the game $G(x)$ according to the sequence if player~I follows his strategy $\sigma$. That means that $(x_j)_j$ is a $\sigma$-sequence if and only if $x_j\in\sigma(x_1,\ldots,x_{j-1})$ for every $j$.

A (winning) strategy for player~II in the game $G(x)$ can be defined analogously. One can also similarly define (winning) strategies for either player, as well as $\sigma$-sequences (where $\sigma$ is a strategy for either player), in other topological games (in Chapter~\ref{chapter:results}, we will introduce the game $\widetilde G(x)$ and recall the games $BM(R)$ and $\mathcal{G}(X)$).

Suppose that $\{(X_s,\tau_s)\colon s\in S\}$ is a nonempty family of topological spaces and $a=(a_s)_{s\in S}\in\Pi_{s\in S}X_s$. Then the \emph{$\Sigma$-product} $\Sigma_{s\in S}X_s(a)$ of the family $\{(X_s,\tau_s)\colon s\in S\}$ with the \emph{base point} $a$ is the set
\begin{equation*}
\left\{(x_s)_{s\in S}\in\Pi_{s\in S}X_s\colon x_s\neq a_s\text{ for at most countably many }s\in S\right\}
\end{equation*}
endowed with the topology inherited from the product space $\Pi_{s\in S}X_s$.

We also need the notion of a ``rich family'' which was first defined and used in \cite{Borwein_Moors}.
Let $(X,\tau)$ be a topological space and let $\mathcal F$ be a family of nonempty closed separable subspaces of $X$. Then $\mathcal F$ is called a \emph{rich family} if the following two conditions are satisfied:
\begin{itemize}
\item[(i)] for every separable subspace $Y$ of $X$, there is $F\in\mathcal F$ such that $Y\subseteq F$,
\item[(ii)] for every increasing (with respect to inclusion) sequence $(F_n)_{n\in\N}$ of elements of $\mathcal F$ it holds $\overline{\bigcup_{n\in\N}F_n}\in\mathcal F$.
\end{itemize}

Recall that a subset of a topological space is called \emph{meager} (or a \emph{set of first category}) if it is the union of countably many nowhere dense sets. A \emph{comeager} (also called \emph{residual}) set is the complement of a meager set. In other words, a comeager set is the intersection of countably many sets with dense interiors.

A topological space is called \emph{Baire} if every intersection of countably many open dense subsets of the space is dense.

\section{$\widetilde W$-points and $\widetilde W$-spaces}\label{chapter:results}
We define $\widetilde W$-spaces in terms of the following topological game. Let $x$ be a point of a topological space $(X,\tau)$. Player~I chooses a non-empty open set $U_1\subseteq X$, then player~II chooses a point $x_1\in U_1$. Next, player~I chooses a non-empty open set $U_2\subseteq X$, and player~II chooses a point $x_2\in U_2$, and so on. Player~I wins if $x$ is an accumulation point of the sequence $(x_n)_{n\in\N}$. Otherwise player~II wins. We denote this topological game by $\widetilde G(x)$.
	
\begin{definition}
We say that a point $x$ of a topological space $(X,\tau)$ is a \emph{$\widetilde W$-point} in $X$ if player~I has a winning strategy in the game $\widetilde G(x)$.
		
We say that a topological space $(X,\tau)$ is a \emph{$\widetilde W$-space} if every point $x\in X$ is a $\widetilde W$-point in $X$.
\end{definition}
	
\begin{lemma}\label{lem:countable_closure}
Let $(X,\tau)$ be a topological space. Whenever $x\in X$ is in the closure of a countable subset of $X$ consisting of $\widetilde W$-points then $x$ is also a $\widetilde W$-point.
\end{lemma}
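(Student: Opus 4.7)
The plan is to combine winning strategies at the countably many $\widetilde W$-points into a single winning strategy for player~I in $\widetilde G(x)$ by interleaving. First, I would fix an enumeration $\{y_n : n\in\N\}$ of a countable set of $\widetilde W$-points whose closure contains $x$, a winning strategy $\sigma_n$ of player~I in $\widetilde G(y_n)$ for each $n$, and a partition $\N = \bigsqcup_{n\in\N} A_n$ into pairwise disjoint infinite sets.

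I would then define a strategy $\sigma$ for player~I in $\widetilde G(x)$ as follows: given a (possibly empty) tuple $(z_1,\ldots,z_{k-1})$ of previous moves by player~II, let $n$ be the unique index with $k\in A_n$, let $j_1<\cdots<j_m$ enumerate $A_n\cap\{1,\ldots,k-1\}$, and set
\[
\sigma(z_1,\ldots,z_{k-1}) := \sigma_n(z_{j_1},\ldots,z_{j_m}).
\]
The essential point, and the reason the argument goes through for $\widetilde W$-points but not obviously for $W$-points, is that this remains a legal move in $\widetilde G(x)$ even though $\sigma_n(z_{j_1},\ldots,z_{j_m})$ need not contain $x$: in $\widetilde G(x)$ player~I is free to play arbitrary non-empty open sets.

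To verify that $\sigma$ is winning, I would take any $\sigma$-sequence $(z_k)_{k\in\N}$ and for each $n$ enumerate $A_n$ as $j^{(n)}_1<j^{(n)}_2<\cdots$. A straightforward induction on $\ell$ shows that $(z_{j^{(n)}_\ell})_{\ell\in\N}$ is a $\sigma_n$-sequence in $\widetilde G(y_n)$, so by the winning property of $\sigma_n$ the point $y_n$ is an accumulation point of this subsequence. Now given any open neighborhood $V$ of $x$, since $x\in\overline{\{y_n : n\in\N\}}$ there is some $y_n\in V$; $V$ is then a neighborhood of $y_n$, so it contains $z_{j^{(n)}_\ell}$ for infinitely many $\ell$, and hence contains $z_k$ for infinitely many $k\in\N$. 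Thus $x$ is an accumulation point of $(z_k)_{k\in\N}$, proving $\sigma$ is winning and $x$ is a $\widetilde W$-point.

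No step is a genuine obstacle; the argument is a bookkeeping-plus-interleaving trick made possible precisely by the relaxation of player~I's move rule from ``neighborhoods of the target point'' to ``arbitrary non-empty open sets''.
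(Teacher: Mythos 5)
Your proof is correct and is essentially the paper's own argument: the paper uses a sequence $(k_n)$ in which each index occurs infinitely often, which is exactly your partition $\N=\bigsqcup_n A_n$ into infinite sets, and it interleaves the strategies $\sigma_n$ in the same way, concluding (as you do, just phrased via the closedness of the set of accumulation points) that $x$ accumulates the full sequence because every neighborhood of $x$ contains some $y_n$.
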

	
\begin{proof}
Suppose that $x\in X$ is in the closure of the set $\{y_k\colon k\in\N\}$ where each $y_k$ is a $\widetilde W$-point. We need to show that $x$ is a $\widetilde W$-point. For each $k\in\N$, we fix a winning strategy $\sigma_k$ for player~I in the game $\widetilde G(y_k)$. We fix a sequence $\{k_n\colon n\in\N\}$ of natural numbers such that each $k\in\N$ occurs infinitely many times in the sequence. We define a strategy for player~I in the game $\widetilde G(x)$ as follows. The first move of player~I is $\sigma_{k_1}(\emptyset)$. Now suppose that the first $n$ moves of the game $\widetilde G(x)$ have been played (and so the points $x_1,\ldots,x_n$ chosen by player~II are already known). Let $p_1<\ldots<p_m$ be all natural numbers $1\le p\le n$ for which $k_p=k_{n+1}$. Then in his $(n+1)$th move, player~I chooses $\sigma_{k_{n+1}}(x_{p_1},\ldots,x_{p_m})$. It is easy to see that the described strategy for player~I is winning. Indeed, each $y_k$ is an accumulation point of the subsequence $(x_n)_{n\in\N,\,k_n=k}$ since the strategy $\sigma_k$ is winning for player~I in the game $\widetilde G(y_k)$. Therefore each $y_k$ is also an accumulation point of the sequence $(x_n)_{n\in\N}$. Now the conclusion immediately follows from the fact that the set of all accumulation points of any given sequence is always a closed set.
\end{proof}

\begin{example}\label{ex:betaN}
The Stone-\v Cech compactification $\beta\N$ of natural numbers $\N$ is a $\widetilde W$-space but not a $W$-space.
\end{example}
	
\begin{proof}
The space $\beta\N$ is a regular separable space which is not first-countable, and so it is not a $W$-space (see \cite[Theorem~3.6]{Gruenhage2}).
		
On the other hand, every $n\in\N$ is clearly a $\widetilde W$-point in $\beta\N$ as it is an isolated point. Therefore by Lemma~\ref{lem:countable_closure}, the space $\beta\N$ is a $\widetilde W$-space.
\end{proof}
	
Note that the space $\beta\N$ contains a dense subspace $\N$ consisting of $W$-points. Therefore the following example is even stronger than the previous one. Also note that Theorem~\ref{th:productOfBaireSpaces}, as a generalization of \cite[Theorem~4.4]{Lin_Moors}, is not justified just by Example~\ref{ex:betaN}. Indeed, $X\times\N$ is a Baire space whenever $(X,\tau)$ is a Baire space (this is trivial but it also follows from the statement of \cite[Theorem~4.4]{Lin_Moors}). And as $X\times\N$ is a dense subspace of $X\times\beta\N$, we conclude that $X\times\beta\N$ is also a Baire space (and in fact, this follows from the second part of \cite[Theorem~4.4]{Lin_Moors}). On the other hand, Example~\ref{ex:sigma_prod} justifies Theorem~\ref{th:productOfBaireSpaces} since the topological space constructed in that example contains no dense subspaces which are $W$-spaces.
	
\begin{example}\label{ex:sigma_prod}
There is a topological space $(X,\tau)$ with the following properties:
\begin{itemize}
\item $X$ is a $\widetilde W$-space,
\item whenever $Z$ is a dense subspace of $X$ then no point $x\in Z$ is a $W$-point in $Z$ (in particular, no point $x\in X$ is a $W$-point in $X$),
\item $X$ possesses a rich family of Baire subspaces.
\end{itemize} 
\end{example}
	
\begin{proof}
Let $S$ be an uncountable set. For every $s\in S$, we put $X^s=\beta\N$. For every $a=(a^s)_{s\in S}\in\Pi_{s\in S}\beta\N$, we define a space $X_a$ as the $\Sigma$-product $\Sigma_{s\in S}X^s(a)$ with the base point $a$. First, we prove that each $X_a$ is a $\widetilde W$-space. Let us fix $a=(a^s)_{s\in S}\in\Pi_{s\in S}\beta\N$ and a point $x=(x^s)_{s\in S}\in X_a$, we will construct a strategy $\sigma$ for player~I in the game $\widetilde G(x)$ played in $X_a$. For every $y=(y^s)_{s\in S}\in X_a$, we fix an infinite sequence $(s_1(y),s_2(y),\ldots)$ of elements of $S$ containing all indices $s\in S$ for which $y^s\neq x^s$ (so for $y=x$, this may be an arbitrary sequence of elements of $S$). We also fix a sequence $(t_n)_{n\in\N}$ of infinite sequences of natural numbers such that every finite sequence $t\in\N^{<\N}$ of natural numbers is an initial segment of infinitely many sequences $t_n$. We define the first move of player~I as $\sigma(\emptyset)=X_a$. Let $x_1=(x_1^s)_{s\in S}$ be the first move of player~II. We put $r_1=s_1(x_1)$. Then we define the second move of player~I as the set of all points $(y^s)_{s\in S}\in X_a$ for which $y^{r_1}$ is equal to the first element of the sequence $t_1$, that is
\begin{equation}\nonumber
\sigma(x_1)=\{(y^s)_{s\in S}\in X_a\colon (y^{r_1})\text{ is an initial segment of }t_1\}.
\end{equation}
Let $x_2=(x_2^s)_{s\in S}$ be the second move of player~II. Then we prolong the sequence $(r_1)$ (of length $k_1=1$) to a sequence $(r_1,\ldots,r_{k_2})$ of pairwise distinct indices from the set $S$ (for some natural number $k_2\ge 1$) such that $\{r_1,\ldots,r_{k_2}\}=\{s_i(x_j)\colon i,j\le 2\}$. Now suppose that for some $n\ge 2$, the first $n$ moves of the game $\widetilde G(x)$ have been played. Let $x_i=(x_i^s)_{s\in S}$, $i=1,\ldots,n$, be the first $n$ moves of player~II. Suppose also that we have already defined a sequence $(r_1,\ldots,r_{k_n})$ of pairwise distinct indices from the set $S$ (where $k_n$ is some natural number) such that $\{r_1,\ldots,r_{k_n}\}=\{s_i(x_j)\colon i,j\le n\}$. Then we define the $(n+1)$th move of player~I by
\begin{equation}\nonumber
\sigma(x_1,\ldots,x_n)=\{(y^s)_{s\in S}\in X_a\colon (y^{r_1},\ldots,y^{r_{k_n}})\text{ is an initial segment of }t_n\}.
\end{equation}
Let $x_{n+1}=(x_{n+1}^s)_{s\in S}$ be the $(n+1)$th move of player~II. Then we prolong the sequence $(r_1,\ldots,r_{k_n})$ to a sequence $(r_1,\ldots,r_{k_{n+1}})$ of pairwise distinct indices from the set $S$ (for some natural number $k_{n+1}\ge k_n$) such that $\{r_1,\ldots,r_{k_{n+1}}\}=\{s_i(x_j)\colon i,j\le n+1\}$.	This completes the construction of the strategy $\sigma$. Next, we show that $\sigma$ is a winning strategy for player~I. We fix a natural number $n_0$ and an open neighborhood $U$ of $x$ of the form
\begin{equation}\nonumber
U=\{(y^s)_{s\in S}\in X_a\colon y^{p_m}\in U_m\text{ for }m=1,\ldots,q\}
\end{equation}
for some pairwise distinct indices $p_1,\ldots,p_q$ from $S$ and for some open neighborhoods $U_m$ of $x^{p_m}$ in $\beta\N$, $m=1,\ldots,q$. We need to find a natural number $n\ge n_0$ such that $x_{n+1}\in U$. We may assume that for every $m=1,\ldots,q$, there are $i,j\in\N$ such that $p_m=s_i(x_j)$, and so there is also $a_m\in\N$ such that $p_m=r_{a_m}$. Recall that we constructed a nondecreasing sequence $(k_n)_{n\in\N}$ of natural numbers for which there clearly is $n_1\in\N$ such that $k_n\ge l:=\max\{a_1,\ldots,a_q\}$ for every $n\ge n_1$. For every $1\le m\le q$, we fix some $b_m\in U_m\cap\N$. We also fix a finite sequence $f=(f_1,\ldots,f_l)$ of natural numbers of length $l$ such that $f_{a_m}=b_m$ for every $m=1,\ldots,q$. By the choice of the sequence $(t_n)_{n\in\N}$, there is a natural number $n\ge\max\{n_0,n_1\}$ such that the finite sequence $f$ is an initial segment of $t_n$. Then	
\begin{equation}\nonumber
\begin{split}
\sigma(x_1,\ldots,x_n)=\{&(y^s)_{s\in S}\in X_a\colon(y^{r_1},\ldots,y^{r_{k_n}})\text{ is an initial segment of }t_n\}\\
\subseteq\{&(y^s)_{s\in S}\in X_a\colon(y^{r_1},\ldots,y^{r_l})\text{ is an initial segment of }t_n\}\\
\subseteq\{&(y^s)_{s\in S}\in X_a\colon y^{r_{a_m}}=f_{a_m}\text{ for }m=1,\ldots,q\}\\
=\{&(y^s)_{s\in S}\in X_a\colon y^{p_m}=b_m\text{ for }m=1,\ldots,q\}\subseteq U,
\end{split}
\end{equation}
and so $x_{n+1}\in\sigma(x_1,\ldots,x_n)\subseteq U$. This shows that $X_a$ is a $\widetilde W$-space for every $a\in\Pi_{s\in S}\beta\N$.
	
In the rest of the proof, we use the well known identification of elements of $\beta\N$ with ultrafilters on the set $\N$ of all natural numbers. Let $\mathcal F$ be the filter consisting of all subsets $A$ of $\N$ with
\begin{equation}\nonumber
\liminf_{k\rightarrow\infty}\frac{|\{i\in A\colon 1\le i\le k\}|}{k}=1.
\end{equation}
We fix an ultrafilter $\mathcal U$ on $\N$ containing the filter $\mathcal F$. Now suppose that the point $a=(a^s)_{s\in S}$ from the previous construction is chosen such that each $a^s$, $s\in S$, is the element of $\beta\N$ corresponding to the ultrafilter $\mathcal U$. We will show that whenever $Z$ is a dense subspace of $X:=X_a$ then no point $x\in Z$ is a $W$-point in $Z$. To this end, we fix a dense subspace $Z$ of $X$ and a point $x=(x^s)_{s\in S}\in Z$. We find a coordinate $s_0\in S$ such that $x^{s_0}=a^{s_0}$ (i.e., the point $x^{s_0}\in\beta\N$ corresponds to the ultrafilter $\mathcal U$). We define a strategy for player~II in the game $G(x)$ played in the subspace $Z$ as follows. Suppose that $U_n$ is the $n$th move of player~I (for some $n\in\N$). We find an open subset $W_n$ of $X$ such that $U_n=W_n\cap Z$.	Then the projection $\pi^{s_0}(W_n)$ of $W_n$ to the coordinate $s_0$ is an open neighborhood of $x^{s_0}$ in $\beta\N$, and so it has an infinite intersection with the subset $\N$ of $\beta\N$. Using this fact together with the density of $Z$ in $X$, player~II can choose his $n$th move $x_n=(x_n^s)_{s\in S}\in U_n$ such that $x_n^{s_0}\in\N$ and $x_n^{s_0}\ge n^2$. We show that this strategy is winning for player~II which will complete the proof. It suffices to show that $x^{s_0}$ is not a cluster point of the set $C=\{x_n^{s_0}\colon n\in\N\}$. By the description of the strategy, it holds
\begin{equation}\nonumber
\limsup_{k\rightarrow\infty}\frac{|\{i\in C\colon 1\le i\le k\}|}{k}=0.
\end{equation}
This means that $\N\setminus C\in\mathcal F\subseteq\mathcal U$. Therefore the set
\begin{equation}\nonumber
U=\{y\in\beta\N\colon y\text{ corresponds to an ultrafilter containing }\N\setminus C\}
\end{equation}
is an open neighborhood of $x^{s_0}$ in $\beta\N$. But $U$ does not intersect the set $C$, and so $x^{s_0}$ is not a cluster point of $C$.
	
Finally, the family of all subspaces of $X$ of the form $\{(x^s)_{s\in S}\in X\colon x^s=a^s\text{ for every }s\in S\setminus S'\}$ where $S'$ is an at most countable subset of $S$ is clearly a rich family of Baire subspaces.
\end{proof}

Note that it is not difficult to see that the space $(X,\tau)$ constructed in Example~\ref{ex:sigma_prod} is even a Baire space (as it is a $\Sigma$-product of compact spaces).

Recall that in the Banach-Mazur game $BM(R)$ played in a topological space $(X,\tau)$ with a subset $R$ of $X$, two players $\beta$ (who starts the game) and $\alpha$ alternately construct a decreasing sequence $(B_n)_{n\in\N}$ of nonempty open subsets of $X$. Player~$\beta$ wins the game if $\bigcap_{n\in\N}B_n\not\subseteq R$. Otherwise player~$\alpha$ wins.

\begin{theorem}{\cite{Oxtoby}}\label{th:OxtobyBM}
Let $R$ be a subset of a topological space $(X,\tau)$. Then $R$ is comeager in $X$ if and only if player~$\alpha$ has a winning strategy in the game $BM(R)$.
\end{theorem}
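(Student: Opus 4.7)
The plan is to prove both directions of this classical equivalence of Oxtoby, with $(\Leftarrow)$ being immediate and $(\Rightarrow)$ requiring a Zorn-style tree construction. For $(\Leftarrow)$, pick open dense sets $G_n\subseteq X$ with $R\supseteq\bigcap_{n\in\N}G_n$, which is possible because $R$ is comeager. Player~$\alpha$'s strategy is then immediate: if $\beta$'s $n$-th move is $B_{2n-1}$, respond with any nonempty open $B_{2n}\subseteq B_{2n-1}\cap G_n$; such a set exists because $G_n$ is dense. Any resulting play satisfies $\bigcap_n B_n\subseteq\bigcap_n G_n\subseteq R$, so $\alpha$ wins.

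For the substantial direction $(\Rightarrow)$, fix a winning strategy $\sigma$ for $\alpha$. My plan is to build, by recursion on $n\in\N$, a tree whose level-$n$ nodes are finite $\sigma$-runs $s=(B_1,B_3,\ldots,B_{2n-1})$ such that at each level the sets $\sigma(s)$ are pairwise disjoint and $D_n:=\bigcup\{\sigma(s):s\text{ at level }n\}$ is open dense in $X$. At the first level I apply Zorn's lemma to pick a maximal pairwise disjoint family $\mathcal{F}_1$ inside the collection $\{\sigma(B_1):B_1\subseteq X\text{ nonempty open}\}$. For any nonempty open $U\subseteq X$, $\sigma(U)\subseteq U$ cannot be disjoint from every member of $\mathcal{F}_1$ (else $\mathcal{F}_1\cup\{\sigma(U)\}$ would contradict maximality), so $D_1$ meets $U$. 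Inductively, at each level-$n$ node $s$, I apply the same Zorn argument inside $\sigma(s)$ to the collection of all $\sigma(B_1,\ldots,B_{2n-1},B_{2n+1})$ with $B_{2n+1}\subseteq\sigma(s)$ nonempty open; the resulting maximal disjoint family becomes the set of children of $s$. Then $D_{n+1}$ is dense in each $\sigma(s)$, hence in $D_n$, hence in $X$.

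Finally, I would show $\bigcap_n D_n\subseteq R$. For any $x\in\bigcap_n D_n$, the level-wise disjointness of $\sigma$-values forces, for each $n$, a unique level-$n$ node $s_n$ with $x\in\sigma(s_n)$; since every child is contained in its parent's $\sigma$-value, each $s_n$ must extend $s_{n-1}$, so the $s_n$ together describe an infinite play of $BM(R)$ in which $\alpha$ follows $\sigma$. By hypothesis $\bigcap_n B_{2n-1}\subseteq R$, and since $x\in\sigma(s_n)\subseteq B_{2n-1}$ for every $n$, we obtain $x\in R$. The main obstacle I foresee is the tree construction itself: producing a single $\sigma$-branch is easy, but witnessing comeagerness requires the branches to exhaust a dense open set at every level while remaining mutually disjoint, so that each point of $\bigcap_n D_n$ canonically picks out one legal play against $\sigma$ to which the winning condition applies.
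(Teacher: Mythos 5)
The paper offers no proof of this statement---it is imported verbatim from Oxtoby---so there is nothing in the text to compare against; your argument must stand on its own, and it does. The easy direction (comeager $\Rightarrow$ winning strategy, obtained by steering into $B_{2n-1}\cap G_n$) is fine, and the converse is the standard maximal-antichain tree construction: the pairwise disjointness of the sets $\sigma(s)$ at each level does exactly the work you identify, namely assigning to each $x\in\bigcap_n D_n$ a unique branch, hence a single run consistent with $\sigma$ whose intersection contains $x$ and is therefore contained in $R$. Two bookkeeping points deserve to be made explicit in a written version. First, apply Zorn to families of \emph{nodes} (finite runs) whose $\sigma$-values are pairwise disjoint, or else fix one witnessing run for each open set in your maximal disjoint family: distinct legal moves of $\beta$ can produce identical responses, and the uniqueness of the level-$n$ node containing $x$ is what makes the branch well defined. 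Second, you need disjointness across the entire level, not merely among siblings; this does hold, because children of distinct level-$n$ nodes $s\neq s'$ have values contained in the disjoint sets $\sigma(s)$ and $\sigma(s')$, but it should be stated as part of the induction. With those details spelled out, the proof is complete and is the classical one.
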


\begin{lemma}{\cite[Lemma~4.2]{Lin_Moors}}\label{lem:find_W}
Let $(X,\tau)$, $(Y,\tau')$ be topological spaces, and let $O$ be an open dense subset of $X\times Y$. Let $U$ be a nonempty open subset of $X$, and let $V_1,\ldots,V_m$ be nonempty open subsets of $Y$. Then there exist a nonempty open subset $W$ of $U$ and points $z_i\in V_i$, $1\le i\le m$, such that $W\times\{z_1,\ldots,z_m\}\subseteq O$.
\end{lemma}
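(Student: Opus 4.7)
The plan is to prove the lemma by induction on $m$, using nothing more than the openness and density of $O$ together with the fact that finite intersections of open sets are open. Each inductive step peels off one more slice while only further shrinking the open set $W \subseteq U$, so that containments already obtained are automatically preserved.

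For the base case $m=1$: since $O$ is dense in $X\times Y$ and $U\times V_1$ is a nonempty open subset of $X\times Y$, the intersection $O\cap(U\times V_1)$ is nonempty, so pick a point $(w,z_1)$ in it. By openness of $O$ in the product topology there is a basic open neighborhood $W_0\times V_0\subseteq O$ of $(w,z_1)$, and then $W:=W_0\cap U$ is a nonempty open subset of $U$ with $W\times\{z_1\}\subseteq O$.

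For the inductive step, assume the statement for $m-1$ slices. Given $U$ and $V_1,\ldots,V_m$, apply the induction hypothesis to $U$ and $V_1,\ldots,V_{m-1}$ to produce a nonempty open $W'\subseteq U$ together with points $z_i\in V_i$ ($1\le i\le m-1$) satisfying $W'\times\{z_1,\ldots,z_{m-1}\}\subseteq O$. Now apply the base case to the nonempty open set $W'$ and the nonempty open set $V_m$ to obtain a nonempty open $W\subseteq W'$ and a point $z_m\in V_m$ with $W\times\{z_m\}\subseteq O$. Since $W\subseteq W'$, the earlier containments transfer verbatim, giving $W\times\{z_1,\ldots,z_m\}\subseteq O$, which closes the induction.

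There is no substantial obstacle; the proof is essentially bookkeeping. The one conceptual point to emphasize is the monotone character of the construction: each step only shrinks the $X$-component $W$ and never perturbs the finitely many $Y$-coordinates $z_i$ already fixed, which is exactly why the finite union of slices $W\times\{z_1,\ldots,z_m\}$ remains inside $O$. This asymmetric roles of $U$ (shrunk repeatedly) and the $V_i$'s (each used once) is the reason a simple induction on $m$ works.
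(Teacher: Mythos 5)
Your proof is correct. The paper itself gives no proof of this lemma (it is quoted from the cited reference), but your induction on $m$ --- using density of $O$ to find a point of $O\cap(W'\times V_m)$, openness of $O$ to fatten it to a basic box, and then shrinking only the $X$-side so that the previously secured slices $W'\times\{z_i\}$ survive intact --- is exactly the standard argument one expects for this statement, and every step checks out.
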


The following lemma has the same proof as \cite[Theorem~4.3]{Lin_Moors}. We only observed that it was not used in the proof of~\cite[Theorem~4.3]{Lin_Moors} that the moves of player~I in the game $G(x)$ are neighborhoods of $x$.

\begin{lemma}\label{lem:productOfBaireSpaces}
Let $(X,\tau)$ be a topological space, and let $(Y,\tau')$ be a $\widetilde W$-space. Let $Z$ be a separable subspace of $Y$, and let $\{O_n\colon n\in\N\}$ be a countable system of dense open subsets of $X\times Y$. Then for every rich family $\mathcal F$ in $Y$, the set
\begin{equation}\nonumber
\begin{split}
R=\big\{x\in X\colon&\text{ there is }F_x\in\mathcal F\text{ containing }Z\text{ such that }\\
&\{y\in F_x\colon(x,y)\in O_n\}\text{ is dense in }F_x\text{ for all }n\in\N\big\}
\end{split}
\end{equation}
is comeager in $X$.
\end{lemma}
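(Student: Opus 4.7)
The plan is to invoke Theorem~\ref{th:OxtobyBM} and reduce the claim that $R$ is comeager in $X$ to producing a winning strategy for player~$\alpha$ in the Banach--Mazur game $BM(R)$. I would build such a strategy by interleaving the Banach--Mazur play with countably many plays of $\widetilde G$-games on $Y$, using Lemma~\ref{lem:find_W} at every move to glue a Banach--Mazur response of $\alpha$ in $X$ to a player-II move in one of the $\widetilde G(z_k)$-games, in such a way that the chosen point lands inside a prescribed $O_{n_\ell}$.

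Concretely, first I would fix a countable dense subset $\{z_k\}_{k\in\N}$ of $Z$, a winning strategy $\sigma_{k,n}$ for player~I in $\widetilde G(z_k)$ for every pair $(k,n)\in\N^2$ (these exist since $Y$ is a $\widetilde W$-space), and an enumeration $(k_\ell,n_\ell)_{\ell\in\N}$ of $\N^2$ in which each pair appears infinitely often. Using property~(i) of $\mathcal F$, pick $F_0\in\mathcal F$ with $Z\subseteq F_0$. At step~$\ell$, given $\beta$'s move $B_\ell\subseteq A_{\ell-1}$, compute $V_\ell=\sigma_{k_\ell,n_\ell}(s_{k_\ell,n_\ell})$, where $s_{k,n}$ is the $\sigma_{k,n}$-sequence built so far; apply Lemma~\ref{lem:find_W} to $B_\ell$, $V_\ell$, and $O_{n_\ell}$ to obtain a nonempty open $W_\ell\subseteq B_\ell$ and $w_\ell\in V_\ell$ with $W_\ell\times\{w_\ell\}\subseteq O_{n_\ell}$; append $w_\ell$ to $s_{k_\ell,n_\ell}$; choose $F_\ell\in\mathcal F$ with $F_{\ell-1}\cup\{w_\ell\}\subseteq F_\ell$ by (i); and play $A_\ell=W_\ell$. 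For any $x\in\bigcap_\ell A_\ell$ set $F_x:=\overline{\bigcup_\ell F_\ell}\in\mathcal F$ by property~(ii), so $Z\subseteq F_x$. By construction, $(x,w_\ell)\in O_{n_\ell}$ for every $\ell$; and the winning nature of $\sigma_{k,n}$ ensures $z_k$ is an accumulation point of $\{w_\ell:(k_\ell,n_\ell)=(k,n)\}$, giving $Z\subseteq\overline{\{w_\ell:n_\ell=n\}}$ for every~$n$.

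The main obstacle is the density requirement in all of $F_x$: the first-pass argument above only exhibits $\{y\in F_x:(x,y)\in O_n\}$ as dense around $Z$, whereas $F_x$ may properly extend $\overline Z$. To close this gap I would enhance the bookkeeping: at step~$\ell$ also fix a countable dense subset $D_\ell$ of the newly selected $F_\ell$, add each $d\in D_\ell$ (itself a $\widetilde W$-point of~$Y$ since $Y$ is a $\widetilde W$-space) to a growing pool of ``targets'' together with fresh winning strategies $\sigma_{d,n}$ in $\widetilde G(d)$, and schedule the augmented list of tasks $(d,n)$ by a standard priority argument so that every task, once introduced, is processed infinitely often. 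Then every nonempty open $U\subseteq Y$ meeting $F_x$ meets $\bigcup_\ell D_\ell$ at some~$d$, and the winning $\sigma_{d,n}$-sequence forces $w_{\ell'}\in U$ with $(x,w_{\ell'})\in O_n$ infinitely often, which gives the required density of $\{y\in F_x:(x,y)\in O_n\}$ in $F_x$ for every $n\in\N$ and completes the verification that $\alpha$'s strategy is winning in $BM(R)$.
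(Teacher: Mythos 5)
Your proposal is correct and follows essentially the same route as the paper's proof: reduce to the Banach--Mazur game via Theorem~\ref{th:OxtobyBM}, and at each round use Lemma~\ref{lem:find_W} to feed player-II moves into countably many $\widetilde G$-games whose targets form a countable dense subset of the eventually constructed $F_x\in\mathcal F$, with the chosen points folded back into the rich family. The only differences are bookkeeping: the paper assumes $(O_n)_{n\in\N}$ is decreasing and runs one game per target point (processing all active targets simultaneously at each round), whereas you run one game per pair of a target and an index $n$ and dovetail one task per round; both variants work.
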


\begin{proof}
Let us fix a rich family $\mathcal F$ in $Y$. We may assume that $Y$ is infinite, otherwise the assertion is trivial. Then we may also assume that all elements of $\mathcal F$ are infinite. Moreover, we may assume that the sequence $(O_n)_{n\in\N}$ is decreasing (with respect to inclusion). For every $y\in Y$, we fix a winning strategy $\sigma_y$ for player~I in the game $\widetilde G(y)$ played in $Y$. We will construct a strategy $\sigma$ for player~$\alpha$ in the game $BM(R)$ played in $X$, and then we will show that this strategy is winning. The rest will follow from Theorem~\ref{th:OxtobyBM}.
	
We start the construction of the strategy $\sigma$ by fixing a countable subset $F_1=\{f_{(1,j)}\colon j\in\N\}$ of $Y$ such that $Z\subseteq\overline{F_1}\in \mathcal F$. Let $B_1\subseteq X$ be the first move of player~$\beta$ in the game $BM(R)$. By Lemma~\ref{lem:find_W}, there are a nonempty open subset $W_1$ of $B_1$ and $z_{(1,1,1)}\in\sigma_{f_{(1,1)}}(\emptyset)$ such that $W_1\times\{z_{(1,1,1)}\}\subseteq O_1$. We define $Z_1=\{z_{(1,1,1)}\}$ and $\sigma(B_1)=W_1$. Note that the sequence $(z_{(1,1,1)})$ (of length 1) is a $\sigma_{f_{(1,1)}}$-sequence.
	
Now suppose that player~$\beta$ has already played his first $n$ moves $B_1,\ldots,B_n$ of the game $BM(R)$. Suppose also that for every $1\le k\le n-1$, we have already defined the $k$th move $\sigma(B_1,\ldots,B_k)$ of player~$\alpha$ in the game $BM(R)$, together with a countable subset $F_k=\{f_{(k,j)}\colon j\in\N\}$ of $Y$ and a finite subset $Z_k=\{z_{(i,j,l)}\colon i+j+l\le k+2\}$ of $Y$ such that
\begin{itemize}
\item[(i)] $Z_{k-1}\cup F_{k-1}\subseteq\overline{F_k}\in\mathcal F$ (if $k\ge 2$),
\item[(ii)] the sequence $(z_{(i,j,1)},\ldots,z_{(i,j,l)})$ is a $\sigma_{f_{(i,j)}}$-sequence for every $i,j,l\in\N$ with $i+j+l=k+2$,
\item[(iii)] $\sigma(B_1,\ldots,B_k)\times\{z_{(i,j,l)}\colon i+j+l=k+2\}\subseteq O_k$.
\end{itemize}
Then we find a countable subset $F_n=\{f_{(n,j)}\colon j\in\N\}$ of $Y$ such that $Z_{n-1}\cup F_{n-1}\subseteq\overline{F_n}\in \mathcal F$. By Lemma~\ref{lem:find_W}, there are a nonempty open subset $W_n$ of $B_n$ and points $z_{(i,j,n+2-i-j)}\in\sigma_{f_{(i,j)}}(z_{(i,j,1)},\ldots,z_{(i,j,n+1-i-j)})$ for every $i+j\le n+1$, such that $W_n\times\{z_{(i,j,n+2-i-j)}\colon i+j\le n+1\}\subseteq O_n$. We define $Z_n=\{z_{(i,j,l)}\colon i+j+l\le n+2\}$ and $\sigma(B_1,\ldots,B_n)=W_n$. Note that conditions (i)-(iii) are satisfied for $k=n$. This completes the construction of the strategy $\sigma$.
	
It remains to show that $\sigma$ is a winning strategy for player~$\alpha$. To this end, let $(B_n\colon n\in\N)$ be a $\sigma$-sequence. Let us fix $x\in\bigcap_{n\in\N}B_n$, we need to prove that $x\in R$. The subspace $F_x=\overline{\bigcup_{n\in\N}F_n}$ of $Y$ is clearly an element of $\mathcal F$ containing $Z$. Therefore it suffices to show that for every $n\in\N$, the set $A_{x,n}:=\{y\in F_x\colon(x,y)\in O_n\}$ is dense in $F_x$. So let us fix $n\in\N$. Let $U$ be an open subset of $Y$ intersecting $F_x$. Then $U$ intersect also $\bigcup_{n\in\N}F_n$, and so there are $i,j\in\N$ such that $f_{(i,j)}\in U$. Since condition (ii) immediately implies that the infinite sequence $(z_{(i,j,l)})_{l=1}^{\infty}$ is a $\sigma_{f_{(i,j)}}$-sequence, there is $l\ge n$ such that $z_{(i,j,l)}\in U$. By condition (iii), it holds $(x,z_{(i,j,l)})\in O_{i+j+l-2}\subseteq O_l\subseteq O_n$. At the same time, condition (i) implies that $z_{(i,j,l)}\in Z_{i+j+l-2}\subseteq\overline{F_{i+j+l-1}}\subseteq F_x$. Therefore $z_{(i,j,l)}\in A_{x,n}\cap F_x\cap U$, and so $A_{x,n}$ is dense in $F_x$.
\end{proof}

The next theorem is an improvement of~\cite[Theorem~4.4]{Lin_Moors}. Its proof is the same as the proof of~\cite[Theorem~4.4]{Lin_Moors}, we only need to use Lemma~\ref{lem:productOfBaireSpaces} instead of~\cite[Theorem~4.3]{Lin_Moors}.

\begin{theorem}\label{th:productOfBaireSpaces}
Let $(X,\tau)$ be a Baire space, and let $(Y,\tau')$ be a $\widetilde W$-space which possesses a rich family $\mathcal F$ of Baire subspaces. Then $X\times Y$ is a Baire space.
\end{theorem}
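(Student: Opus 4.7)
The plan is to reduce the Baire property of $X \times Y$ to a point-existence argument using Lemma~\ref{lem:productOfBaireSpaces} together with the Baire property of the members of the rich family $\mathcal F$. Fix a countable family $\{O_n\colon n\in\N\}$ of dense open subsets of $X\times Y$; to prove that $\bigcap_{n\in\N}O_n$ is dense in $X\times Y$, I would take an arbitrary nonempty basic open rectangle $U\times V$ with $U\in\tau$ and $V\in\tau'$ and look for a point of $\bigcap_{n\in\N}O_n$ inside it.

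The key step is to apply Lemma~\ref{lem:productOfBaireSpaces} with the separable subspace $Z=\{y_0\}$, where $y_0$ is an arbitrarily chosen point of $V$. The lemma yields a comeager subset
\begin{equation*}
R=\bigl\{x\in X\colon\exists F_x\in\mathcal F,\ Z\subseteq F_x,\ \{y\in F_x\colon(x,y)\in O_n\}\text{ is dense in }F_x\ \forall n\in\N\bigr\}
\end{equation*}
of $X$. Since $(X,\tau)$ is Baire, $R$ is dense in $X$, so I can pick some $x\in R\cap U$, together with the corresponding set $F_x\in\mathcal F$ that contains $y_0$.

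Now I would exploit that $F_x$ is a Baire subspace of $Y$ by hypothesis. For each $n\in\N$, set $A_n=\{y\in F_x\colon(x,y)\in O_n\}$. This set is open in $F_x$ because $O_n$ is open in $X\times Y$ (it is the intersection of $F_x$ with the slice $\{y\in Y\colon(x,y)\in O_n\}$, which is open in $Y$), and it is dense in $F_x$ by the defining property of $R$. The Baire property of $F_x$ therefore guarantees that $\bigcap_{n\in\N}A_n$ is dense in $F_x$. Since $y_0\in V\cap F_x$, the set $V\cap F_x$ is a nonempty open subset of $F_x$, so I can pick a point $y\in V\cap\bigcap_{n\in\N}A_n$. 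Then $(x,y)\in(U\times V)\cap\bigcap_{n\in\N}O_n$, which is exactly what was needed.

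I do not expect any serious obstacle: all the technical work is absorbed into Lemma~\ref{lem:productOfBaireSpaces}, and the remainder is a routine assembly. The only point worth double-checking is the choice of the separable set $Z$ as a one-point subset of $V$, which is what forces the element $F_x\in\mathcal F$ produced by the lemma to intersect $V$ and thereby makes the final application of the Baire property of $F_x$ meaningful.
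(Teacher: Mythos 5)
Your proposal is correct and follows essentially the same route as the paper's own proof: apply Lemma~\ref{lem:productOfBaireSpaces} with a singleton $Z=\{y_0\}\subseteq V$, use the Baire property of $X$ to find $x\in R\cap U$, and then use the Baire property of $F_x$ to locate a point of $\bigcap_{n\in\N}O_n$ in $U\times V$. The only cosmetic difference is that the paper passes to the larger comeager set of those $x$ whose $F_x$ merely intersects $V$, whereas you keep track of $y_0\in F_x$ directly; both variants work identically.
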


\begin{proof}
Let $\{O_n\colon n\in\N\}$ be a countable family of open dense subsets of $X\times Y$, we need to show that $\bigcap_{n\in\N}O_n$ is dense as well. So let $U$ be a nonempty open subset of $X$ and $V$ be a nonempty open subset of $Y$, we will show that $\bigcap_{n\in\N}O_n\cap(U\times V)\neq\emptyset$. Fix an arbitrary point $z\in V$. By Lemma~\ref{lem:productOfBaireSpaces} used on $Z=\{z\}$, the set
\begin{equation}\nonumber
\begin{split}
\big\{x\in X\colon&\text{ there is }F_x\in\mathcal F\text{ containing }z\text{ such that }\\
&\{y\in F_x\colon(x,y)\in O_n\}\text{ is dense in }F_x\text{ for all }n\in\N\big\}
\end{split}
\end{equation}
is comeager in $X$, and so the (bigger) set
\begin{equation}\nonumber
\begin{split}
\big\{x\in X\colon&\text{ there is }F_x\in\mathcal F\text{ intersecting }V\text{ such that }\\
&\{y\in F_x\colon(x,y)\in O_n\}\text{ is dense in }F_x\text{ for all }n\in\N\big\}
\end{split}
\end{equation}
is also comeager in $X$. Therefore there are $x_0\in U$ and $F_{x_0}\in\mathcal F$ intersecting $V$ such that $\{y\in F_{x_0}\colon(x_0,y)\in O_n\}$ is dense in $F_{x_0}$ for all $n\in\N$. As $F_{x_0}$ is a Baire space, the set $\{y\in F_{x_0}\colon(x_0,y)\in\bigcap_{n\in\N}O_n\}$ is also dense in $F_{x_0}$. So there is $y_0\in F_{x_0}\cap V$ such that $(x_0,y_0)\in\bigcap_{n\in\N}O_n$. In particular, $\bigcap_{n\in\N}O_n\cap(U\times V)\neq\emptyset$.
\end{proof}

For our next application of $\widetilde W$-space we need to consider another game.

\medskip

The {\it Choquet} game, $\mathcal{G}(X)$, played on a topological space $(X,\tau)$ between two players $\beta$ (who starts the game) and $\alpha$ who alternately construct a decreasing sequence 
$(B_n)_{n\in\N}$ of nonempty open subsets of $X$. Player~$\alpha$ wins the game if $\bigcap_{n\in\N}B_n\not= \emptyset$. Otherwise player~$\beta$ wins.

\medskip

The importance of this definition is revealed next.

\begin{theorem}[{\cite[Theorem 8.11]{Kechris}}] A topological space $(X,\tau)$ is a Baire space if, and only if, the player $\beta$ does not have a winning strategy in the Choquet game 
played on $X$.
\end{theorem}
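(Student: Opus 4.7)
The plan is to reduce the statement to Theorem~\ref{th:OxtobyBM} via a translation between the Choquet game and the Banach--Mazur game $BM(\emptyset)$. Note that in $BM(\emptyset)$, player $\alpha$'s winning condition $\bigcap_n B_n\subseteq\emptyset$ coincides with $\beta$'s winning condition $\bigcap_n B_n=\emptyset$ in the Choquet game; however, $\alpha$ is the second mover in $BM$ while $\beta$ is the first mover in Choquet, which forces the translation to pass through a restriction to a subspace.

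I would first establish the key lemma: $\beta$ has a winning strategy in the Choquet game on $X$ if and only if there exists a nonempty open set $V\subseteq X$ such that $\alpha$ has a winning strategy in $BM(\emptyset)$ played on $V$. For the implication $(\Rightarrow)$, let $\sigma$ be a winning strategy for $\beta$ in the Choquet game on $X$ and put $V=\sigma(\emptyset)$. Define $\tau$ for $\alpha$ in $BM(\emptyset)$ on $V$ by $\tau(D_1,D_3,\dots,D_{2k-1})=\sigma(D_1,D_3,\dots,D_{2k-1})$. Any $\tau$-consistent $BM$-play $D_1\supseteq D_2\supseteq\dots$ on $V$ corresponds, under the relabelling $C_1=V$, $C_{i+1}=D_i$, to the $\sigma$-consistent Choquet play $V,D_1,D_2,D_3,\dots$ on $X$; so the condition $\bigcap_i C_i=\emptyset$ (forced by $\sigma$ winning) becomes $\bigcap_i D_i=\emptyset$, which is exactly $\alpha$'s winning condition in $BM(\emptyset)$. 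For the converse $(\Leftarrow)$, given $V$ and a winning strategy $\tau$ for $\alpha$ in $BM(\emptyset)$ on $V$, the inverse translation $\sigma(\emptyset)=V$ and $\sigma(C_2,C_4,\dots,C_{2k})=\tau(C_2,C_4,\dots,C_{2k})$ transports $\tau$'s winning into $\sigma$'s winning by the same bookkeeping.

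Given the lemma, Theorem~\ref{th:OxtobyBM} applied on the subspace $V$ with $R=\emptyset$ says that $\alpha$ has a winning strategy in $BM(\emptyset)$ on $V$ if and only if $\emptyset$ is comeager in $V$, i.e., $V$ is meager in itself. Since $V$ is open in $X$, a routine check shows that a subset of $V$ is nowhere dense in $V$ if and only if it is nowhere dense in $X$, so $V$ being meager in itself is equivalent to $V$ being meager in $X$. Combining these facts, $\beta$ has a winning strategy in the Choquet game on $X$ if and only if some nonempty open subset of $X$ is meager in $X$, which is precisely the failure of $X$ being a Baire space.

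The main subtlety is in the game translation: although the winning conditions for $\beta$ in Choquet and for $\alpha$ in $BM(\emptyset)$ coincide set-theoretically, the starting-player roles are different, so one must fix $\beta$'s first Choquet move as the \emph{ambient space} of the $BM$ game and re-index the alternating moves. Once this bookkeeping is in place, the theorem is a direct consequence of Oxtoby's theorem, with no scheme-theoretic or Zorn-type construction required beyond those already present in the proof of Theorem~\ref{th:OxtobyBM}.
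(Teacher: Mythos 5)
Your proposal is correct. Note, however, that the paper does not prove this statement at all: it is quoted verbatim from Kechris and used as a black box, so there is no ``paper proof'' to compare against. Your derivation is therefore genuinely additional content, and it is a sound one: the key lemma (player $\beta$ has a winning strategy in the Choquet game on $X$ if and only if player $\alpha$ has a winning strategy in $BM(\emptyset)$ played on some nonempty open $V\subseteq X$) is exactly the right way to handle the role reversal --- the player seeking empty intersection moves first in the Choquet game but second in $BM$, and freezing $\beta$'s opening move $V=\sigma(\emptyset)$ as the ambient space of the $BM$ game is the standard fix. The two remaining reductions you invoke are both legitimate and you flag the only delicate points: $\emptyset$ comeager in $V$ means $V$ is meager in itself, and since $V$ is open in $X$, nowhere dense subsets of $V$ are nowhere dense in $X$ (and conversely for subsets of $V$), so this is equivalent to $V$ being meager in $X$; finally, $X$ fails to be Baire exactly when some nonempty open set is meager in $X$. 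One small remark: your translation implicitly uses that a strategy may be recorded as a function of the opponent's moves alone (the player's own moves being recoverable), which matches the convention the paper sets up for $G(x)$, so the bookkeeping is consistent with the paper's definitions. Since the paper applies the theorem to arbitrary topological spaces while Kechris's Theorem~8.11 is stated in a more restricted setting, having a proof that goes through the general form of Theorem~\ref{th:OxtobyBM} as stated here is actually a point in favour of your route.
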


The final two notions required for our next theorem are that of separate continuity and quasi-continuity.
A function $g : X \times Y \to Z$ that maps from a product of topological spaces $(X,\tau)$ and $(Y,\tau')$ into a topological space
$(Z,\tau'')$ is said to be \emph{separately continuous} on $X \times Y$ if for each $x_0 \in X$ and $y_0 \in Y$ the functions $y \mapsto g(x_0, y)$ and $x \mapsto g(x, y_0)$ are both continuous on $Y$ and $X$ respectively.

Suppose that $f:X \to Y$ is a function and $x_0 \in X$.  Then we say that $f$ is {\it quasi-continuous at $x_0$} if for every open neighborhood
$U$ of $x_0$ and $W$ of $f(x_0)$ there exists a nonempty open subset $V$ of $U$ such that $f(V) \subseteq W$.

The next theorem is a variant of \cite[Theorem 3.1]{Mirmostafaee} which deals with $W$-points instead of $\widetilde W$-points.

\begin{theorem}\label{QC-result} Suppose that $(X,\tau)$, $(Y, \tau')$ and $(Z,\tau'')$ are topological spaces and $f:X\times Y \to Z$ is a separately continuous function. If $(X,\tau)$ is a Baire space, $(Z,\tau'')$ is a regular space and $y_0 \in Y$ is a $\widetilde W$-point, then $f$ is quasi-continuous at each point of $X \times \{y_0\}$.
\end{theorem}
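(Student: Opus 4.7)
The plan is to argue by contradiction, using the characterization of Baire spaces via the Choquet game. Suppose $f$ is not quasi-continuous at some point $(x_0,y_0)\in X\times\{y_0\}$; then there exist open neighborhoods $U_0\ni x_0$ and $V_0\ni y_0$ and an open $W\ni f(x_0,y_0)$ such that $f(\Omega)\not\subseteq W$ for every nonempty open $\Omega\subseteq U_0\times V_0$. Using regularity of $Z$, I will fix an open $W_0$ with $f(x_0,y_0)\in W_0\subseteq\overline{W_0}\subseteq W$, and using continuity of $x\mapsto f(x,y_0)$ at $x_0$ I will shrink $U_0$ to an open $U\ni x_0$ with $f(U\times\{y_0\})\subseteq W_0$; the failure of quasi-continuity is still witnessed by $U\times V_0$. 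Finally, let $\sigma$ be a winning strategy for player~I in $\widetilde G(y_0)$.

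The core of the proof will be the construction of a strategy for player~$\beta$ in the Choquet game $\mathcal{G}(X)$, synchronously running a play of $\widetilde G(y_0)$ in which player~I follows $\sigma$. I declare $\beta_1:=U$. After $n$ rounds, with positions $\beta_1\supseteq\alpha_1\supseteq\cdots\supseteq\beta_n\supseteq\alpha_n$ in $\mathcal{G}(X)$ and player-II moves $y_1,\ldots,y_{n-1}$ in $\widetilde G(y_0)$, I set $O_n:=\sigma(y_1,\ldots,y_{n-1})$. In the good case $O_n\cap V_0\neq\emptyset$, applying the failure of quasi-continuity to the nonempty open set $\alpha_n\times(O_n\cap V_0)\subseteq U\times V_0$ yields $(x^\ast,y_n)$ with $f(x^\ast,y_n)\notin W$, hence $f(x^\ast,y_n)\in Z\setminus\overline{W_0}$; continuity of $x\mapsto f(x,y_n)$ then gives a nonempty open $\beta_{n+1}\subseteq\alpha_n$ with $f(\beta_{n+1}\times\{y_n\})\cap\overline{W_0}=\emptyset$. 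In the bad case $O_n\cap V_0=\emptyset$, I pick any $y_n\in O_n$ and set $\beta_{n+1}:=\alpha_n$. This simultaneously produces a legitimate $\beta$-strategy and a genuine $\sigma$-sequence $(y_n)$.

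To finish, I will verify that the strategy wins for $\beta$. Take any $x\in\bigcap_n\beta_n$; since $x\in U$, $f(x,y_0)\in W_0$. Because $(y_n)$ is a $\sigma$-sequence and $\sigma$ is winning, $y_0$ is an accumulation point of $(y_n)$, so continuity of $y\mapsto f(x,y)$ at $y_0$ furnishes an open $V\ni y_0$ with $f(\{x\}\times V)\subseteq W_0$. The neighborhood $V\cap V_0$ of $y_0$ contains $y_n$ for infinitely many $n$; every such $n$ belongs to the good case (because $y_n\in V_0$), so by construction $f(x,y_n)\notin\overline{W_0}$, while at the same time $f(x,y_n)\in W_0\subseteq\overline{W_0}$---a contradiction. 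Hence $\bigcap_n\beta_n=\emptyset$ for every play against this strategy, so player~$\beta$ has a winning strategy in $\mathcal{G}(X)$, contradicting the Baire property of $X$.

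The only real obstacle, and the sole substantive departure from the $W$-point argument, is that the moves $O_n$ of player~I in $\widetilde G(y_0)$ need not be neighborhoods of $y_0$ and so may miss $V_0$ entirely, blocking a direct application of non-quasi-continuity. The bad-case clause $\beta_{n+1}:=\alpha_n$ harmlessly parks the $\beta$-strategy during such rounds, while the accumulation property of $\sigma$-winning plays guarantees that the good case recurs inside every neighborhood of $y_0$---which is exactly what the final contradiction needs.
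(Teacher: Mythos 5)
Your proposal is correct and follows essentially the same route as the paper: assume quasi-continuity fails at $(x_0,y_0)$, use regularity to interpose a closed neighborhood, and build a strategy for player~$\beta$ in the Choquet game on $X$ by running it in tandem with a $\sigma$-play of $\widetilde G(y_0)$, splitting each round according to whether player~I's move meets $V_0$ and parking the $\beta$-strategy when it does not. The only (harmless) cosmetic difference is that you derive the final contradiction pointwise at some $y_n\in V\cap V_0$ rather than passing to $f(x_\infty,y_0)$ via the closure of $\{y_{n_k}\}$.
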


\begin{proof} Suppose, in order to obtain a contradiction, that $f$ is not quasi-continuous at some point $(x_0,y_0) \in X \times \{y_0\}$.  Then there exists an open neighborhood $U_0$ of $x_0$, an open
neighborhood $V_0$ of $y_0$ and an open neighborhood $W$ of $f(x_0,y_0)$ such that $f(U' \times V') \not\subseteq \overline{W}$ for any pair of nonempty open subsets $U'$ of $U_0$ and $V'$ of $V_0$.
Since, $x \mapsto f(x,y_0)$, is continuous and $f(x_0,y_0) \in W$, we may assume, by possibly making $U_0$ smaller that $f(U_0 \times \{y_0\}) \subseteq W$.

\medskip

We will now inductively define a strategy $t$ for the player $\beta$ in the Choquet game $\mathcal{G}(X)$. Let $\sigma$ be a winning strategy for the player I in the $\widetilde{G}(y_0)$ played on $Y$.

{\it Step 1.} If $\sigma(\emptyset) \cap V_0 = \emptyset$ then choose $y_1 \in \sigma(\emptyset)$ - any choice is fine - and define $t(\emptyset) := U_0$.  Otherwise, choose $y_1 \in \sigma(\emptyset) \cap V_0$
and $x' \in U_0$ such that $f(x',y_1) \not\in \overline{W}$.  Since, $x \mapsto f(x,y_1)$, is continuous there exists an open neighborhood $U'$ of $x'$, contained in $U_0$ such that
$f(U' \times \{y_1\}) \subseteq Z\setminus \overline{W}$.  Define $t(\emptyset) := U'$.  

\medskip

Now suppose that $ y_j \in Y$ and $U_j$ have been defined for each $2 \leq j \leq n$ so that (i) $U_1, U_2, \ldots, U_n$ are the first $n$ moves of the player $\alpha$ in the $\mathcal{G}(U_0)$ played on $U_0$; \\ (ii)
either $\sigma(y_1, \ldots, y_{j-1}) \cap V_0 = \emptyset$, $y_j \in \sigma(y_1, \ldots, y_{j-1})$ and $t(U_0, \ldots, U_{j-1}) = U_{j-1}$ 

 or 
 
\noindent $\sigma(y_1, \ldots, y_{j-1}) \cap V_0 \not= \emptyset$, $y_j \in \sigma(y_1, \ldots, y_{j-1})\cap V_0$ and $f(t(U_0, \ldots, U_{j-1}) \times \{y_j\})$ is a subset of $Z \setminus \overline{W}$.

\medskip

{\it Step n+1}. If $\sigma(y_1, \ldots, y_{n}) \cap V_0 = \emptyset$ choose $y_{n+1} \in \sigma(y_1, \ldots, y_{n})$ and define $t(U_1, \ldots, U_{n}) := U_{n}$. Otherwise, choose $y_{n+1} \in \sigma(y_1, \ldots, y_{n})\cap V_0$
and $x' \in U_n$ such that $f(x',y_{n+1}) \not\in \overline{W}$. Since, $x \mapsto f(x,y_{n+1})$, is continuous there exists an open neighborhood $U'$ of $x'$, contained in $U_n$ such that $f(U' \times \{y_{n+1}\}) \subseteq Z \setminus \overline{W}$.
Define $t(U_1, \ldots, U_n) := U'$.

\medskip

This completes the definition of $t$. Since $(X,\tau)$ is a Baire space, $t$ is not a winning strategy for the player $\beta$ in the $\mathcal{G}(X)$ game. Hence there exists a play $\{U_n\}_{n=1}^\infty$ where
player $\alpha$ wins, i.e., $\bigcap_{n \in \N} U_n \not= \emptyset$. Also, $y_0$ is an accumulation point of the sequence $(y_n)_{n \in \N}$, since $\sigma$ is a winning strategy for the player I in the $\widetilde{G}(y_0)$ game.  Let $(n_k)_{k=1}^\infty$ be a strictly increasing
sequence of natural numbers such that $\{n \in \N: y_n \in V_0\} = \{n_k:k \in \N\}$. Then $y_0 \in \overline{\{y_{n_k}:k \in \N\}}$.  Let $x_\infty \in \bigcap U_n \subseteq U_0$.  Thus,
$f(x_\infty, y_{n_k}) \not\in \overline{W}$ for all $k \in \N$. However, this implies that $f(x_\infty, y_0) \not\in W$ since, $y \mapsto f(x_\infty, y)$, is continuous and $y_0 \in \overline{\{y_{n_k}:k \in \N\}}$. This contradicts our assumption that
$f(U_0 \times \{y_0\}) \subseteq W$. Hence, $f$ must be quasi-continuous at each point of $X \times \{y_0\}$.
\end{proof} 

This result has implications for semitopological groups.

\medskip

A triple $(G, \cdot, \tau)$ is called a {\it semitopological group} ({\it topological group}) if $(G, \cdot)$ is a group, $(G, \tau)$ is a topological space and the multiplication operation ``$\cdot$'' is
separately continuous on $G \times G$ (jointly continuous on $G \times G$ and the inversion mapping, $g \mapsto g^{-1}$,  is continuous on $G$). 

\medskip

Let $(X, \tau)$ be a topological space. Following E. Reznichenko, (see, \cite{moors}) we shall say that a subset $W \subseteq X \times X$ is {\it separately open, in the second variable,} if for each  $x \in X$, $\{z \in X: (x,z) \in W\} \in \tau$ and we shall say that a topological space $(X, \tau)$ is a {\it $\Delta$-Baire space} if for each separately open, in the second variable set $W$, containing $\Delta_X := \{(x,y) \in X \times X: x=y\}$, there exists a nonempty open subset $U$ of $X$ such that $U \times U \subseteq \overline{W}$.  Many spaces are $\Delta$-Baire spaces. Indeed, all metrisable Baire spaces and all locally \v Cech-complete spaces are $\Delta$-Baire spaces, see \cite{moors}.

\begin{corollary}\label{Corollary1} Let $(G, \cdot, \tau)$ be a semitopological group.  If $(G, \tau)$ is: (i) a regular  Baire $\widetilde W$-space and (ii) a  $\Delta$-Baire space, then $(G, \cdot, \tau)$ is a topological group. In particular, if $(G, \tau)$ is a metrisable Baire space, then $(G, \cdot, \tau)$ is a topological group.
\end{corollary}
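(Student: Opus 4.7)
The plan is to imitate the proof of \cite[Corollary~1]{moors} line by line, systematically replacing each appeal to \cite[Lemma~2]{moors} (quasi-continuity at a point with a countable local base) by our Theorem~\ref{QC-result} (quasi-continuity at a $\widetilde W$-point). Write $m\colon G\times G\to G$ for the multiplication. The first step is to make $m$ quasi-continuous at every point of $G\times G$: since every point of a $\widetilde W$-space is a $\widetilde W$-point, Theorem~\ref{QC-result} applied with $X=Y=Z=G$ and $f=m$ (which is separately continuous by hypothesis) delivers this directly.

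The second step, which is the central technical core, uses the $\Delta$-Baire property to upgrade quasi-continuity of $m$ at $(e,e)$ to joint continuity of $m$ at $(e,e)$. Given an open neighborhood $W$ of $e$, use regularity to fix $W_0$ with $e\in W_0\subseteq\overline{W_0}\subseteq W$, and consider
$$V:=\{(x,y)\in G\times G: x^{-1}y\in W_0\}.$$
The set $V$ is separately open in the second variable (for fixed $x$ the $y$-slice is the open set $xW_0$, since left translation $L_x$ is a homeomorphism in a semitopological group) and it contains $\Delta_G$. The $\Delta$-Baire hypothesis therefore yields a nonempty open $U\subseteq G$ with $U\times U\subseteq\overline{V}$, and one may translate $U$ (using the invariance of $V$ under the diagonal left-translation $(x,y)\mapsto(gx,gy)$) so as to arrange $e\in U$. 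Combined with quasi-continuity of $m$ at $(e,e)$ -- which inside $U\times U$ furnishes nonempty open rectangles mapped by $m$ into $W_0$ -- the density of $V$ in $\overline{V}$ furnishes witnesses $(a,b)$ with $a^{-1}b\in W_0$; a careful translation argument, together with the regularity nesting $\overline{W_0}\subseteq W$ and the homeomorphism properties of $L_{a^{-1}}$ and $R_{b^{-1}}$, then produces an open neighborhood $M$ of $e$ with $MM\subseteq W$, i.e.\ joint continuity of $m$ at $(e,e)$.

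The third step propagates joint continuity from $(e,e)$ to every $(g,h)\in G\times G$ via the homeomorphism properties of left- and right-translations, and then deduces continuity of inversion. For propagation: given an open $W\ni gh$, the set $g^{-1}Wh^{-1}$ is an open neighborhood of $e$, so from the second step there is an open $M\ni e$ with $MM\subseteq g^{-1}Wh^{-1}$, whence $(gM)(Mh)\subseteq W$ with $gM$ and $Mh$ open neighborhoods of $g$ and $h$. For continuity of the inversion map $\iota\colon g\mapsto g^{-1}$, one adapts the sequence-based argument of \cite[Corollary~1]{moors} by replacing sequences with plays of the game $\widetilde G(e)$: supposing $\iota$ is discontinuous at $e$, pick an open $W\ni e$ for which $W^{-1}$ is not a neighborhood of $e$, fix a winning strategy $\sigma$ for player~I in $\widetilde G(e)$, and use the joint continuity of $m$ together with a carefully constructed $\sigma$-sequence to derive a contradiction with the winning property of $\sigma$. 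The ``in particular'' clause is then immediate, since every metrisable Baire space is regular, first-countable (hence a $W$-space and a fortiori a $\widetilde W$-space), and $\Delta$-Baire by the remark preceding the corollary.

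The principal obstacle is the combination step in the second paragraph: precisely aligning the quasi-continuity-provided rectangles, the $\Delta$-Baire density of $V$, and the regularity nesting so that the resulting open set $M$ really does satisfy $MM\subseteq W$, rather than merely sitting inside an uncontrolled conjugate of $W_0$. A subsidiary difficulty is faithfully translating the first-countable sequence argument of \cite{moors} for inversion continuity into the game-theoretic setting of $\widetilde G(e)$, where player~I's moves are arbitrary nonempty open sets rather than neighborhoods of $e$.
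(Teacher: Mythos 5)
Your first paragraph already contains the entire content of the paper's proof: apply Theorem~\ref{QC-result} at every point (every point of a $\widetilde W$-space being a $\widetilde W$-point) to conclude that the multiplication is quasi-continuous on all of $G\times G$. The paper then simply invokes Theorem~1 of \cite{moors} as a black box: a semitopological group that is a regular $\Delta$-Baire space with quasi-continuous multiplication is a topological group. Since after your first step the hypotheses of that theorem are met verbatim, nothing further needs to be proved; the ``in particular'' clause is handled exactly as you say. Everything after your first paragraph is therefore an attempt to re-prove \cite[Theorem~1]{moors} from scratch, which is not required.

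That said, taken on its own terms your re-proof is not complete, and this is where the genuine gap lies. The two hard steps --- extracting an open $M\ni e$ with $MM\subseteq W$ from the combination of quasi-continuity, the $\Delta$-Baire rectangle $U\times U\subseteq\overline{V}$, and the regularity nesting $\overline{W_0}\subseteq W$; and the continuity of inversion --- are precisely the content of \cite[Theorem~1]{moors}, and you do not carry them out: you describe them and then explicitly flag them as the ``principal obstacle'' and a ``subsidiary difficulty.'' A sketch that names its own unresolved steps is not a proof of those steps. (Your inversion argument is also aimed at the wrong target: the sequence argument in \cite{moors} for inversion lives inside the proof of Theorem~1 there and does not need to be re-run through the game $\widetilde G(e)$; once you are citing that theorem, no game-theoretic adaptation is needed at all.) The repair is trivial: delete paragraphs two and three and replace them with the citation of \cite[Theorem~1]{moors}, exactly as the paper does.
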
 
\begin{proof} This follows directly from Theorem \ref{QC-result} and Theorem 1 in \cite{moors} which states that a semitopological group that is a regular $\Delta$-Baire space and whose multiplication operation is quasi-continuous is a topological group.
\end{proof}

We shall end this paper with another application of Theorem \ref{th:productOfBaireSpaces} and Theorem \ref{QC-result}. To state this corollary we need to recall the following definition. Let $(Z,\tau)$ be a topological space and $\rho$ some metric on $Z$.
The space $(Z,\tau)$ is said to be {\it fragmented by the metric $\rho$}, if for every $\varepsilon >0$ and every nonempty subset $A$ of $Z$ there exists a nonempty relatively open subset $B$ of $A$ with $\rho-\mathrm{diameter}(B) < \varepsilon$. In such a case the
space $(Z,\tau)$ is called {\it fragmentable}.

\medskip

An important theorem concerning fragmentable spaces is given next.

\begin{theorem}[\mbox{\cite[Theorem 1]{KM}}] \label{continuity}Let $(X,\tau)$ be a Baire space and $f:X \to Z$ be a  quasi-continuous map from $(X,\tau)$ into a topological space $(Z,\tau')$ which is fragmented by some metric $\rho$. Then there exists a dense $G_\delta$-subset $C \subseteq X$ 
at the points of which $f:(X,\tau) \to (Z,\rho)$ is continuous. In particular, if the topology generated by the metric $\rho$ contains the topology $\tau'$, then $f:(X,\tau) \to (Z,\tau')$ is continuous at every point of the set ¨$C$.
\end{theorem}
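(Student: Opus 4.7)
The plan is to construct, for each $n \in \N$, a dense open subset $O_n$ of $X$ on which the $\rho$-oscillation of $f$ is uniformly small, and then to take $C := \bigcap_{n \in \N} O_n$. Explicitly I would define
\begin{equation*}
O_n := \bigcup\bigl\{U \in \tau : \mathrm{diam}_\rho f(U) < 1/n\bigr\}.
\end{equation*}
Openness of each $O_n$ is immediate from the definition. Assuming each $O_n$ is also dense, the Baire property of $(X,\tau)$ yields that $C$ is a dense $G_\delta$-subset of $X$; and at each $x \in C$ and each $\varepsilon > 0$, picking $n > 1/\varepsilon$ and invoking $x \in O_n$ furnishes a $\tau$-open neighborhood $U$ of $x$ with $f(U)$ of $\rho$-diameter below $\varepsilon$, which is precisely the continuity of $f : (X,\tau) \to (Z,\rho)$ at $x$.

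The crux is therefore the density of each $O_n$. Fix a nonempty $\tau$-open $W \subseteq X$. I would apply fragmentability to the nonempty set $A := f(W) \subseteq Z$: this produces a $\tau'$-open $V \subseteq Z$ with $V \cap A \neq \emptyset$ and $\mathrm{diam}_\rho(V \cap A) < 1/n$. Choose any $x \in W$ with $f(x) \in V$. By quasi-continuity of $f$ at $x$, applied to the $\tau$-open neighborhood $W$ of $x$ and the $\tau'$-open neighborhood $V$ of $f(x)$, there is a nonempty $\tau$-open $U \subseteq W$ with $f(U) \subseteq V$. Since trivially $f(U) \subseteq f(W) = A$, one obtains $f(U) \subseteq V \cap A$, hence $\mathrm{diam}_\rho f(U) < 1/n$ and $U \subseteq O_n \cap W$, proving density.

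The ``in particular'' clause is then immediate: the assumption that the topology generated by $\rho$ contains $\tau'$ means every $\tau'$-open neighborhood of $f(x)$ is also $\rho$-open, so $(\tau,\rho)$-continuity at $x \in C$ automatically upgrades to $(\tau,\tau')$-continuity at $x$.

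The main obstacle I expect is conceptual rather than technical: one must resist the temptation to apply fragmentability directly to the whole of $Z$ (or to $\overline{f(W)}$) and instead fragment the image $f(W)$ itself, since it is precisely this choice that allows the quasi-continuity inclusion $f(U) \subseteq V$ to be combined, via $U \subseteq W$, with the automatic inclusion $f(U) \subseteq f(W)$ into the small-diameter set $V \cap f(W)$.
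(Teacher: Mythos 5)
Your argument is correct. The paper itself states this result without proof, quoting it from \cite{KM}, so there is nothing internal to compare against; but your oscillation argument --- density of each $O_n$ via fragmenting the image $f(W)$ and then shrinking $W$ to a nonempty open $U$ with $f(U)\subseteq V\cap f(W)$ by quasi-continuity, followed by the Baire intersection --- is exactly the standard proof of the Kenderov--Moors theorem, and every step (including the ``in particular'' upgrade from $\rho$-continuity to $\tau'$-continuity) checks out.
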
 

\begin{corollary}\label{Cor:sepCont-cont} Suppose that $(X,\tau)$, $(Y, \tau')$ and $(Z,\tau'')$ are topological spaces and $f:X\times Y \to Z$ is a separately continuous function. If: (i) $(X,\tau)$ is a Baire space;  (ii)  $(Y,\tau')$ is a $\widetilde W$-space which possesses a rich family $\mathcal F$ of Baire subspaces and
(iii) $(Z,\tau'')$ is a regular space that is fragmented by some metric $\rho$ whose topology contains the topology $\tau''$. Then $f$ is continuous at the points of a dense $G_\delta$-subset of $X \times Y$.
\end{corollary}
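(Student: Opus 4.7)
The plan is to show that this corollary follows by simply chaining together three results already established in the paper: Theorem~\ref{th:productOfBaireSpaces} to make the product a Baire space, Theorem~\ref{QC-result} to upgrade separate continuity to quasi-continuity everywhere on $X\times Y$, and Theorem~\ref{continuity} to pass from quasi-continuity into a fragmentable target to continuity on a dense $G_\delta$-set.

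First I would invoke Theorem~\ref{th:productOfBaireSpaces}: since $(X,\tau)$ is a Baire space and $(Y,\tau')$ is a $\widetilde W$-space possessing a rich family $\mathcal F$ of Baire subspaces, the product $X\times Y$ is a Baire space. Next I would observe that every $y_0\in Y$ is a $\widetilde W$-point (because $Y$ is a $\widetilde W$-space), so by Theorem~\ref{QC-result}, applied with the hypotheses that $(X,\tau)$ is Baire, $(Z,\tau'')$ is regular, and $f$ is separately continuous, the function $f$ is quasi-continuous at each point of $X\times\{y_0\}$. Varying $y_0$ over $Y$, we conclude that $f\colon X\times Y\to Z$ is quasi-continuous at every point of $X\times Y$.

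Finally, I would apply Theorem~\ref{continuity} to the quasi-continuous map $f\colon X\times Y\to Z$, where the domain $X\times Y$ is Baire (by the first step) and the target $(Z,\tau'')$ is fragmented by a metric $\rho$ whose topology contains $\tau''$. The theorem then yields a dense $G_\delta$-subset $C\subseteq X\times Y$ at the points of which $f\colon (X\times Y,\tau\times\tau')\to (Z,\tau'')$ is continuous, which is precisely the conclusion.

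There is no real obstacle here; the argument is essentially a bookkeeping exercise verifying that the hypotheses of the three theorems line up. The only small point to be careful about is to note explicitly that quasi-continuity at every point of $X\times Y$ (i.e.\ the pointwise conclusion of Theorem~\ref{QC-result} as $y_0$ ranges over all of $Y$) is the same as $f$ being a quasi-continuous map in the global sense required by Theorem~\ref{continuity}.
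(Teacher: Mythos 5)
Your proposal is correct and follows exactly the paper's own argument: Theorem~\ref{th:productOfBaireSpaces} gives that $X\times Y$ is Baire, Theorem~\ref{QC-result} (applied at every $y_0\in Y$) gives quasi-continuity of $f$ on all of $X\times Y$, and Theorem~\ref{continuity} then yields continuity on a dense $G_\delta$-set. Nothing further is needed.
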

\begin{proof} From Theorem \ref{th:productOfBaireSpaces}, $X \times Y$ is a Baire space. From Theorem \ref{QC-result}, $f:X \times Y \to Z$ is quasi-continuous.  The result then follows from Theorem \ref{continuity}.
\end{proof}
The utility of this result stems from the fact that, in addition to all  metrisable spaces, there are many topological spaces $(Z,\tau)$ that are fragmented by some metric $\rho$ whose topology contains the original topology $\tau$, see \cite{KM2}. 

\bibliographystyle{elsarticle-num}
\bibliography{References}

\begin{thebibliography}{10}
\expandafter\ifx\csname url\endcsname\relax
  \def\url#1{\texttt{#1}}\fi
\expandafter\ifx\csname urlprefix\endcsname\relax\def\urlprefix{URL }\fi
\expandafter\ifx\csname href\endcsname\relax
  \def\href#1#2{#2} \def\path#1{#1}\fi

\bibitem{Gruenhage1}
G.~Gruenhage, \href{http://dx.doi.org/10.2307/1997479}{Continuously perfectly
  normal spaces and some generalizations}, Trans. Amer. Math. Soc. 224~(2)
  (1976) 323--338.
\newblock \href {http://dx.doi.org/10.2307/1997479}
  {\path{doi:10.2307/1997479}}.
\newline\urlprefix\url{http://dx.doi.org/10.2307/1997479}

\bibitem{Gruenhage2}
G.~Gruenhage, Infinite games and generalizations of first-countable spaces,
  General Topology and Appl. 6~(3) (1976) 339--352.

\bibitem{Lin_Moors}
P.~Lin, W.~B. Moors, Rich families, {$W$}-spaces and the product of {B}aire
  spaces, Math. Balkanica (N.S.) 22~(1-2) (2008) 175--187.

\bibitem{moors}
W.~B. Moors, Some {B}aire semitopological groups that are topological groups,
  Topology Appl. 230 (2017) 381--392.

\bibitem{Borwein_Moors}
J.~M. Borwein, W.~B. Moors,
  \href{http://dx.doi.org/10.1090/S0002-9939-99-05001-7}{Separable
  determination of integrability and minimality of the {C}larke subdifferential
  mapping}, Proc. Amer. Math. Soc. 128~(1) (2000) 215--221.
\newblock \href {http://dx.doi.org/10.1090/S0002-9939-99-05001-7}
  {\path{doi:10.1090/S0002-9939-99-05001-7}}.
\newline\urlprefix\url{http://dx.doi.org/10.1090/S0002-9939-99-05001-7}

\bibitem{Oxtoby}
J.~C. Oxtoby, The {B}anach-{M}azur game and {B}anach category theorem, in:
  Contributions to the theory of games, vol. 3, Annals of Mathematics Studies,
  no. 39, Princeton University Press, Princeton, N. J., 1957, pp. 159--163.

\bibitem{Kechris}
A.~S. Kechris, \href{http://dx.doi.org/10.1007/978-1-4612-4190-4}{Classical
  descriptive set theory}, Vol. 156 of Graduate Texts in Mathematics,
  Springer-Verlag, New York, 1995.
\newblock \href {http://dx.doi.org/10.1007/978-1-4612-4190-4}
  {\path{doi:10.1007/978-1-4612-4190-4}}.
\newline\urlprefix\url{http://dx.doi.org/10.1007/978-1-4612-4190-4}

\bibitem{Mirmostafaee}
A.~K. Mirmostafaee,
  \href{http://dx.doi.org/10.15352/bjma/1313363009}{Topological games and
  strong quasi-continuity}, Banach J. Math. Anal. 5~(2) (2011) 131--137.
\newblock \href {http://dx.doi.org/10.15352/bjma/1313363009}
  {\path{doi:10.15352/bjma/1313363009}}.
\newline\urlprefix\url{http://dx.doi.org/10.15352/bjma/1313363009}

\bibitem{KM}
P.~S. Kenderov, I.~S. Kortezov, W.~B. Moors,
  \href{http://dx.doi.org/10.1016/S0166-8641(99)00180-7}{Continuity points of
  quasi-continuous mappings}, Topology Appl. 109~(3) (2001) 321--346.
\newblock \href {http://dx.doi.org/10.1016/S0166-8641(99)00180-7}
  {\path{doi:10.1016/S0166-8641(99)00180-7}}.
\newline\urlprefix\url{http://dx.doi.org/10.1016/S0166-8641(99)00180-7}

\bibitem{KM2}
P.~S. Kenderov, W.~B. Moors,
  \href{http://dx.doi.org/10.1112/S002461079900753X}{Fragmentability and
  sigma-fragmentability of {B}anach spaces}, J. London Math. Soc. (2) 60~(1)
  (1999) 203--223.
\newblock \href {http://dx.doi.org/10.1112/S002461079900753X}
  {\path{doi:10.1112/S002461079900753X}}.
\newline\urlprefix\url{http://dx.doi.org/10.1112/S002461079900753X}

\end{thebibliography}

\end{document}